\documentclass[leqno,12pt]{amsart}
\usepackage{amscd,amsfonts,amsmath,amssymb,amstext,amsthm}
\usepackage[utf8]{inputenc}
\usepackage[T2A]{fontenc}
\usepackage{cite}
\usepackage[unicode]{hyperref}
\usepackage[warn]{mathtext}
\usepackage{color}
\usepackage{xcolor}
%http://www.mathnet.ru/links/35273e049c624523f18282e6f88f64ed/faa438.pdf
%\usepackage[small,nohug,heads=vee]{diagrams}
%\diagramstyle[labelstyle=\scriptstyle]
%http://homepages.ulb.ac.be/~joelfine/preprints/Kahler_intro_ch2.pdf

%\usepackage[multiple]{footmisc}

\makeatletter
\def\@settitle{\begin{center}%
    \baselineskip14\p@\relax
    \bfseries
    \MakeUppercase{\@title}
  \end{center}%
}
\makeatother
\newtheorem{theorem}{Theorem}%[section]
\newtheorem{lemma}{Lemma}

\newtheorem{proposition}{Proposition}
\newtheorem{corollary}{Corollary}

%\newtheorem{theorem}{Теорема}
%\newtheorem{lemma}{Лемма}
%%\numberwithin{lemma}{section}
%\newtheorem{proposition}{Предложение}%[section]
%\newtheorem{corollary}{Следствие}%[section]
%\theoremstyle{remark}
%\newtheorem{definition}{Определение}%[section]
%\newtheorem{notation}{Обозначение}[section]
%\newtheorem{remark}{Замечание}%[section]
%\newtheorem{example}{Пример}%[section]

%\numberwithin{equation}{section}

\pagestyle{plain}

\def\C{{\mathbb C}}
\def\P{{\mathbb P}}
\def\Z{{\mathbb Z}}

\def\R{{\mathbb R}}
\def\codim{{\rm codim}}
\def\conv{{\rm conv}}
\def\vol{{\rm vol}}
\def\re{{\rm Re\:}}

\def\E{\:{\rm e}}
\def\ES{{\rm ES}}

\begin{document}

\title{Hermitian mixed volume and an average number of common zeros of holomorphic functions
}
\author{Boris Kazarnovskii}
\address {Institute for Information Transmission Problems \newline
19 B.Karetny per., 127994, Moscow, Russia,\newline
{\rm B.K.:}
{\it kazbori@gmail.com}.}
\thanks{MSC 32A60: 53C65, 58B20}
\keywords{Crofton formula, Hermitian mixed volume}
%\thanks{The research was carried out at the Institute for Information Transmission Problems under support by the Russian Foundation of Sciences,
%grant  No.14-50-00150. }
%%\dedicatory{}
\begin{abstract}
Let $V_i$ be a finite dimensional Hermitian vector
space of holomorphic
sections of a line bundle $L_i$ on a complex $n$-dimen\-sional manifold $X$.
We associate to $V_i$ the
non-negative Hermitian quadratic form $g_i$ on $X$,
define a  Hermitian mixed volume of $X$
for a "mixing tuple" of $n$ non-negative Hermitian forms,
and prove that
the average number of common zeroes of $f_1\in V_1,\ldots, f_n\in V_n$
equals to the mixed volume of $X$ for the "mixing tuple"
$g_1,\ldots,g_n$.
This note is related to
\href{https://arXiv.org/abs/1802.02741}{arXiv:1802.02741},
where the average number of common zeros for real equations
are treated in a similar way.
\end{abstract}
\maketitle
\section{Introduction}\label{s1}
Let $L_1,\cdots,L_n$ be holomorphic line bundles on a complex manifold $X$
 of dimension $n$
and let $V_i\subset\Gamma(X,L_i)$ be a finite dimensional subspace of holomorphic sections,
such that
\begin{equation}\label{1}
\forall i\leq n,\:x \in X \ \exists s\in V_i: s(x) \ne 0.
\end{equation}
Assume that each $V_i$ has a fixed
Hermitian inner product. %$\langle.,.\rangle_i$.
Let $\P_i=\P(V_i)$ be a projectivization of $V_i$ equipped
with the corresponding Fubini-Study metric $\Phi_i$,
such that  $\vol(\P_i)=1$.
Let $U$ be a relatively compact domain in $X$.
For $(s_1, \ldots, s_n) \in \P_1 \times \ldots \times \P_n$ we denote by
$N_U(s_1, \ldots, s_n)$ the number of isolated zeros of intersection
of hypersurfaces $s_i=0$ in $U$.
We call
$$
{\mathfrak M}_U(V_1, \ldots, V_n) = \int _{\P_1\times \ldots \times \P_n}N_U(s_1,\ldots,s_n)\,ds_1\cdot\ldots\cdot ds_n
$$
the \emph{average number of common zeros in $U$} of $n$ sections $f_i\in V_i$.

Let $A_i(x)=\{f\in V_i\colon f(x)=0\}$.
From (\ref{1}) it follows that %$\codim A_i(x)=1$
%for all
$\forall x\in X, i\leq n\colon$ $\codim\: A_i(x)=1$.
So we get the mapping $\theta_i\colon X \to\P_i^*$
assigning to $x\in X$ the subspace $A_i(x)$.
Consider the pull-back $g_i=\theta^*_i(\Phi^*_i)$ of dual to $\Phi_i$
Fubini-Study metric $\Phi^*_i$ on $\P^*_i$.
%Let $g_i=\re(\theta^*_i(h^*_i))$ be a corresponding to $h^*_i$
%Riemannian metric on $X$.

For any tuple $h_1,\ldots,h_n$ of non-negative Hermitian forms on $X$
we define the mixed Hermitian volume $\vol^H_{h_1,\ldots,h_n}(U)$;
see Section \ref{s2}.
\begin{theorem}\label{thm1}
$
{\mathfrak M}_U(V_1, \ldots, V_n) =n!\:\vol^H_{g_1,\ldots,g_n}(U)
$
\end{theorem}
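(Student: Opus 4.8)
The plan is to read Theorem~\ref{thm1} as a Crofton formula and to prove it by averaging the divisors of the random sections one factor at a time, then assembling the results into a mixed volume. The geometric point that drives everything is that, for a section $s\in\P_i$, the zero divisor $Z_s=\{s=0\}\subset X$ is pulled back by the fixed map $\theta_i$ from a hyperplane in $\P_i^*$: since $s(x)=0$ means precisely $s\in A_i(x)=\theta_i(x)$, we have $Z_s=\theta_i^{-1}(\hat H_s)$, where $\hat H_s=\{A\in\P_i^*\colon s\in A\}$ is the hyperplane in $\P_i^*$ projectively dual to the point $s$. Thus each random hypersurface in $X$ is the $\theta_i$-preimage of a random hyperplane in $\P_i^*$.

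First I would prove the one-factor Crofton identity. Averaging the current of integration $[\hat H_s]$ over $s\in\P_i$ against the unitarily invariant probability measure $ds$ yields a closed positive $U(V_i)$-invariant $(1,1)$-current on $\P_i^*$, hence a fixed multiple of the Fubini--Study form of $\Phi_i^*$, the constant being pinned down by the normalization $\vol(\P_i)=1$. Because $\theta_i$ does not depend on $s$, pulling back commutes with the averaging, and one obtains
\begin{equation*}
\int_{\P_i}[Z_s]\,ds=\theta_i^*\!\left(\int_{\P_i}[\hat H_s]\,ds\right)=\theta_i^*(\omega_{\Phi_i^*})=\omega_i,
\end{equation*}
the smooth $(1,1)$-form associated with the Hermitian form $g_i=\theta_i^*(\Phi_i^*)$.

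Next I would handle the full count. For a generic tuple $(s_1,\ldots,s_n)$ the divisors $Z_{s_1},\ldots,Z_{s_n}$ meet transversally at finitely many interior points of $U$, so that $N_U(s_1,\ldots,s_n)=\int_U[Z_{s_1}]\wedge\cdots\wedge[Z_{s_n}]$. Integrating over $\P_1\times\cdots\times\P_n$, exchanging the order of integration, and using that the factors are chosen independently, the averaging passes inside the wedge factor by factor and the one-factor identity gives
\begin{equation*}
{\mathfrak M}_U(V_1,\ldots,V_n)=\int_U\bigwedge_{i=1}^{n}\left(\int_{\P_i}[Z_{s_i}]\,ds_i\right)=\int_U\omega_1\wedge\cdots\wedge\omega_n.
\end{equation*}
Finally I would invoke the definition of the Hermitian mixed volume from Section~\ref{s2}: the factor $n!$ is exactly the constant relating the wedge product $\omega_1\wedge\cdots\wedge\omega_n$ to the mixed determinant of the Hermitian forms $g_1,\ldots,g_n$ that defines $\vol^H_{g_1,\ldots,g_n}(U)$, so that $\int_U\omega_1\wedge\cdots\wedge\omega_n=n!\,\vol^H_{g_1,\ldots,g_n}(U)$, which is the assertion.

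The hard part will be the exchange of the product integral with the wedge product of currents in the third step, since $[Z_{s_1}]\wedge\cdots\wedge[Z_{s_n}]$ is defined only for tuples in which the divisors intersect properly. This must be supported by a Bertini--Sard argument showing that for almost every $(s_1,\ldots,s_n)$ the intersection is transverse, finite, and stays away from $\partial U$; only then is $N_U$ the integral of a well-defined sum of point masses, and only then is the exchange legitimate. All integrands being non-negative, Tonelli's theorem then licenses the interchange, after which the independence of the factors reduces the computation to the one-factor identity already in hand. The same interchange can be organized on the incidence variety $I=\{(x,s_1,\ldots,s_n)\colon s_i(x)=0\ \forall i\}$ via the coarea formula --- the projection to $\prod_i\P_i$ recording $N_U$ and the projection to $U$ producing the density $\omega_1\wedge\cdots\wedge\omega_n$ --- which is the route taken for real equations in the cited work, and transplanting its transversality and integrability estimates, together with the verification that the Fubini--Study normalizations combine into the clean factor $n!$, is the principal technical burden.
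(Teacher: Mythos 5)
Your proposal is correct in substance, but it is considerably more self-contained than the paper, whose entire proof of Theorem~\ref{thm1} consists of two steps: quote the Crofton formula ${\mathfrak M}_U(V_1,\ldots,V_n)=\int_U\omega_1\wedge\cdots\wedge\omega_n$ for products of projective spaces from \cite{Ka1,Ka2}, and observe that by Lemma~\ref{lPol} (Wirtinger plus polarization) the right-hand side equals $n!\,\vol^H_{g_1,\ldots,g_n}(U)$ by the very definition of the Hermitian mixed volume --- so your closing step needs no ``mixed determinant'' computation; it is definitional. What you do differently is open the black box: you reprove the Crofton formula itself via the identification $Z_{s_i}=\theta_i^{-1}(\hat H_{s_i})$, unitary averaging of hyperplane currents, Bertini--Sard for generic transversality, and Tonelli/coarea for the interchange --- essentially the scheme of \cite{we} transplanted to the complex setting, which is exactly where the paper points the reader. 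That buys a proof independent of the citations, at the cost of the technical work you correctly flag as the principal burden. Two refinements you should make if writing it out in full: (i) the constant in the one-factor identity $\int_{\P_i}[\hat H_s]\,ds=\omega_{\Phi_i^*}$ is pinned down by cohomology rather than by volume --- since $ds$ is a probability measure, the average is the unique $U(V_i)$-invariant closed positive representative of the hyperplane class, so the identity holds when $\Phi_i^*$ carries the hyperplane normalization $\int_\ell\omega_{\Phi_i^*}=1$ on a line $\ell$, which is not literally the same scaling as $\vol(\P_i)=1$ (the paper is equally loose about this constant); and (ii) the simultaneous interchange $\int_U\bigwedge_i\bigl(\int_{\P_i}[Z_{s_i}]\,ds_i\bigr)$ is best organized iteratively --- average over $s_1$ with $s_2,\ldots,s_n$ fixed generic, replacing the current $[Z_{s_1}]$ by the smooth form $\omega_1$, then repeat factor by factor --- since the wedge of currents is defined only off the null set of improperly intersecting tuples that your Bertini--Sard step controls, and the iterative version avoids averaging inside an undefined product.
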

\noindent
Actually the theorem coincides with the Crofton formula
for the product of projective spaces \cite{Ka1,Ka2}; see Section \ref{s2}.
The sources of the theorem are the well-known BKK formula
\cite{BKK},
and also a recent result on the average number of roots
of systems of real equations \cite{we}.
\section{Crofton formula and Hermitian mixed volume}\label{s2}
\noindent
%\cite{Ka1,Ka2}
Let  $\theta_i\colon X \to\P_i^*$ be a mapping
defined in Section \ref{s1}.
Denote by $\omega_i$ the pull-back
of the Fubini-Study form on $\P^*_i=\P(V^*_i)$
under the mapping $\theta_i$.
%of dual Fubini-Study metric $h^*_i$ on $\P^*_i$.
Then the Crofton formula for the product of projective spaces
tells that
$$
 {\mathfrak M}_U(V_1,\ldots,V_n)=\int_U \omega_1\wedge\ldots\wedge\omega_n.
$$
Below, any non-negative Hermitian quadratic form $h$ on $X$ is called a Hermitian metric.
Recall that $h$ is called non-negative if its eigenvalues are non-negative.
Let $W_+$ be a cone of Hermitian metrics.
Define a function $\vol_U\colon W_+\to\R$
as $\vol_U(h)=\vol_h(U)$.
Recall that $f\colon W_+\to\R$ is called a homogeneous polynomial of degree $k$
if for any $g_1,g_2\in W_+$ the function $f(\lambda_1 g_1+\lambda_2 g_2)$
is a homogeneous of degree $k$ in positive variables $\lambda_1,\lambda_2$.
\begin{lemma}\label{lPol}
$\vol_U$ is a homogeneous polynomial of degree $n$
on $W_+$.
\end{lemma}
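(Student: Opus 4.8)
The plan is to reduce the claim to the elementary fact that the Hermitian volume form is the top exterior power of a $(1,1)$-form depending \emph{linearly} on the metric. To a Hermitian metric $h\in W_+$ one attaches its fundamental $(1,1)$-form $\omega_h$, written in local holomorphic coordinates $z^1,\ldots,z^n$ as $\omega_h=\frac{i}{2}\sum_{j,k}h_{j\bar k}\,dz^j\wedge d\bar z^k$, where $(h_{j\bar k})$ is the Hermitian matrix of $h$. The Riemannian volume form of $h$ is then $\omega_h^n/n!$, so that
$$
\vol_U(h)=\int_U\frac{\omega_h^n}{n!}.
$$
When $h$ degenerates this integrand is merely non-negative, possibly vanishing, but the integral remains well defined because $U$ is relatively compact; I take this expression as the definition of $\vol_U$ on all of $W_+$.

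The decisive observation is that $h\mapsto\omega_h$ is linear, since the coefficients of $\omega_h$ are exactly the entries $h_{j\bar k}$. Hence for $g_1,g_2\in W_+$ and $\lambda_1,\lambda_2>0$ (the combination again lies in $W_+$ by convexity of the non-negative cone) one has $\omega_{\lambda_1 g_1+\lambda_2 g_2}=\lambda_1\omega_{g_1}+\lambda_2\omega_{g_2}$. Because each $\omega_{g_i}$ has even degree, these forms commute under the wedge product and the binomial theorem applies verbatim; integrating over $U$ and dividing by $n!$ gives
$$
\vol_U(\lambda_1 g_1+\lambda_2 g_2)=\frac{1}{n!}\sum_{k=0}^n\binom{n}{k}\left(\int_U\omega_{g_1}^k\wedge\omega_{g_2}^{\,n-k}\right)\lambda_1^k\lambda_2^{\,n-k},
$$
with each coefficient a finite real number. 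This is manifestly a homogeneous polynomial of degree $n$ in the positive variables $\lambda_1,\lambda_2$, which is precisely the required property.

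The main point to justify carefully---and the one I regard as the only real obstacle---is the pair of structural facts invoked at the outset: the identity $\vol_U(h)=\int_U\omega_h^n/n!$ and the linearity of $h\mapsto\omega_h$. Both are standard in Hermitian geometry: the first is the identification of the Riemannian volume form of a Hermitian metric with $\omega_h^n/n!$ (equivalently, $\omega_h^n/n!=\det(h_{j\bar k})$ times the Euclidean volume form in local coordinates), and the second is immediate from the coordinate formula for $\omega_h$. Once these are in place, polynomiality is nothing more than the binomial expansion above, and the degree is forced to equal $n=\dim_{\C}X$ because the volume form is the $n$-th exterior power of $\omega_h$.
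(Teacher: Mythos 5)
Your proof is correct and takes essentially the same route as the paper: both rest on the Wirtinger-type identity $\vol_U(h)=\frac{1}{n!}\int_U\omega_h^n$ together with the linearity of $h\mapsto\omega_h$ (the paper writes $\omega=\mathrm{Im}(h)$), plus the observation that the formula still makes sense for degenerate $h\in W_+$. The only difference is presentational: you expand $(\lambda_1\omega_{g_1}+\lambda_2\omega_{g_2})^n$ by the binomial theorem explicitly, while the paper states the same fact as the existence of the symmetric multilinear extension $\vol^H_{h_1,\ldots,h_n}(U)=\frac{1}{n!}\int_U\omega_1\wedge\ldots\wedge\omega_n$.
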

\begin{proof}
By Wirtinger's theorem,
$\vol_U(h)=\frac{1}{n!}\int_U\omega^n$,
were $\omega={\rm Im}(h)$.
Let $\omega_i={\rm Im}(h_i)$ for $h_1,\ldots,h_n\in W_+$.
Therefore, the function $\vol_U$ extends to a multilinear symmetric $n$-form
$$
\vol^H_{h_1,\ldots,h_n}(U)=\frac{1}{n!}\int_U\omega_1\wedge\ldots\wedge\omega_n.
$$
in variables $h_1,\ldots,h_n$,
such that $\vol^H_{h,\ldots,h}(U)=\vol_U(H)$.
\end{proof}
\noindent
We call $\vol^H_{h_1,\ldots,h_n}(U)$ the
\emph{Hermitian mixed volume} of $U$ for the "mixing tuple" $h_1,\ldots,h_n$.
Now Theorem \ref{thm1} follows from the Crofton formula.
\section{Example: exponential sums}\label{s3}
\noindent
\emph{Exponential sum} (\ES) is a function
in $\C^n$ of the form
$$f(z)=\sum_{\lambda\in\Lambda\subset\C^{n*},\:c_\lambda\in\C}c_\lambda \E^{\langle z,\lambda\rangle},$$
where $\Lambda$ is a finite subset of the dual space $\C^{n*}$.
The set $\Lambda$ is called the \emph{support} of \ES.
The \emph{Newton polytope} of the support
is a convex hull $\conv(\Lambda)$ of $\Lambda$.

Let $X=\C^n$.
We consider the trivial line bundles $L_1,\ldots,L_n$,
and the spaces $V_i\subset\Gamma(L_i,\C^n)$ of \ES s with the supports $\Lambda_i$.
Choose the Hermitian inner product in $V_i$ as
$$
\langle\sum_{\lambda\in\Lambda_i}a_\lambda\E^{\langle z,\lambda\rangle},\sum_{\lambda\in\Lambda_i}b_\lambda\E^{\langle z,\lambda\rangle}\rangle=
\sum_{\lambda\in\Lambda_i}a_\lambda\bar b_\lambda.
$$
Then, by Theorem \ref{thm1},
%$$dd^cH_1\land\cdots\land dd^cH_n\:(\varphi)=
%\frac{1}{\pi^n}n!V_n(\Delta_1,\cdots,\Delta_n)\int_{\im\C^n}\varphi\:|dy|,$$
$$
{\mathfrak M}_U(V_1, \ldots, V_n) =\frac{n!}{(2\pi)^n}\int_U
dd^c\log\sum_{\lambda\in\Lambda_1}\E^{2\re\langle z,\lambda\rangle}\wedge\ldots\wedge
dd^c\log\sum_{\lambda\in\Lambda_n}\E^{2\re\langle z,\lambda\rangle}.
$$
\begin{lemma}\label{lMA1}
The function $\frac{1}{2t}\sum_{\lambda\in\Lambda}\E^{2\re\langle tz,\lambda\rangle}$,
with $t\to+\infty$,
uniformly converges to the support function $h(z)$ of the Newton polytope of $\Lambda$.
\end{lemma}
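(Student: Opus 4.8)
The plan is to recognize the stated function---understood, as in the displayed Monge--Ampère expression above, with the logarithm---as a soft maximum that tropicalizes to the support function. Write $M=\#\Lambda$ and set
$$
F_t(z)=\frac{1}{2t}\log\sum_{\lambda\in\Lambda}\E^{2\re\langle tz,\lambda\rangle}
=\frac{1}{2t}\log\sum_{\lambda\in\Lambda}\E^{2t\re\langle z,\lambda\rangle},
$$
using that $\re\langle tz,\lambda\rangle=t\,\re\langle z,\lambda\rangle$ for $t>0$. First I would identify the limit: since $\lambda\mapsto\re\langle z,\lambda\rangle$ is real-linear, its maximum over the polytope $\conv(\Lambda)$ is attained at a vertex, so the support function coincides with the pointwise maximum over the finite set, $h(z)=\max_{\lambda\in\Lambda}\re\langle z,\lambda\rangle$.

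The heart of the argument is a two-sided estimate on the exponential sum. Each summand is bounded above by $\E^{2t\,h(z)}$, while the whole sum is bounded below by its largest term, which equals $\E^{2t\,h(z)}$; hence
$$
\E^{2t\,h(z)}\le\sum_{\lambda\in\Lambda}\E^{2t\re\langle z,\lambda\rangle}\le M\,\E^{2t\,h(z)}.
$$
Taking logarithms and dividing by $2t$ yields
$$
0\le F_t(z)-h(z)\le\frac{\log M}{2t}.
$$

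The upper bound $\frac{\log M}{2t}$ is independent of $z$, so $F_t\to h$ uniformly---in fact uniformly on all of $\C^n$, with an explicit $O(1/t)$ rate---which establishes the claim. I do not expect a genuine obstacle here: the only two points requiring a word of care are (i) that the support function of the convex hull coincides with the maximum over the finite set $\Lambda$, which is the standard fact that a linear functional attains its extremum at a vertex, and (ii) that the convergence is global rather than merely on compacta, a consequence of the $z$-independence of the rate.
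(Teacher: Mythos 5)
Your proof is correct and follows essentially the same route as the paper: both single out a maximizing exponent $\mu(z)\in\Lambda$ (equivalently, the largest term of the sum), factor it out, and bound the remaining sum between $1$ and $\#\Lambda$ to get a $z$-independent error of order $\frac{\log\#\Lambda}{2t}$. You also correctly read the lemma as concerning $\frac{1}{2t}\log\sum_{\lambda\in\Lambda}\E^{2\re\langle tz,\lambda\rangle}$ (the $\log$ is missing in the statement but present in the paper's own proof), and your explicit two-sided estimate is in fact slightly cleaner than the paper's displayed error term.
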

\noindent
\emph{Proof.}
Let $\mu(z)\in\Lambda$,
such that $\forall \lambda\in\Lambda\colon\:\re\langle z,\mu(z)\rangle\geq\re\langle z,\lambda\rangle$.
Then
$$
 \frac{1}{2t}\log\sum_{\lambda\in\Lambda}\E^{2\re\langle tz,\lambda\rangle} =
h(z)+\frac{1}{2t}(\#\Lambda+\log\sum_{\lambda\in\Lambda}\E^{2\re\langle tz,\lambda-\mu(z)\rangle})=
h(z)+ o(1).
$$
\noindent
\begin{proposition}[see \cite{Ka3}]
With increasing $t$ the differential form
$$
\frac{1}{t^n}\frac{1}{(2\pi)^n}dd^c\log\sum_{\lambda\in\Lambda_1}\E^{2\re\langle z,\lambda\rangle}\wedge\ldots\wedge
dd^c\log\sum_{\lambda\in\Lambda_n}\E^{2\re\langle z,\lambda\rangle}
$$
converges to the correctly defined positive current
$$\frac{1}{\pi^n} dd^ch_1\wedge\ldots\wedge dd^ch_n,$$
where $h_i$ is a support function of the Newton polytope of $\Lambda_i$.
\end{proposition}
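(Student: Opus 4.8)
The plan is to reduce the statement to the uniform convergence recorded in Lemma~\ref{lMA1} and then to invoke the continuity of the mixed complex Monge--Amp\`ere operator. For each $i$ put
$$
\phi_{i,t}(z)=\frac{1}{2t}\log\sum_{\lambda\in\Lambda_i}\E^{2\re\langle tz,\lambda\rangle}.
$$
Because $\E^{2\re\langle tz,\lambda\rangle}=|\E^{\langle z,t\lambda\rangle}|^2$, each $\phi_{i,t}$ is the logarithm of a finite sum of squared moduli of nowhere vanishing holomorphic functions, hence a smooth plurisubharmonic function. By Lemma~\ref{lMA1} the $\phi_{i,t}$ converge, as $t\to+\infty$, uniformly on compact subsets of $\C^n$ to the support function $h_i(z)=\max_{\lambda\in\Lambda_i}\re\langle z,\lambda\rangle$ of the Newton polytope of $\Lambda_i$.

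First I would rewrite the form through the $\phi_{i,t}$. Viewed as functions of $z$, one has $\log\sum_{\lambda\in\Lambda_i}\E^{2\re\langle tz,\lambda\rangle}=2t\,\phi_{i,t}$, so by linearity of $dd^c$ the $i$-th factor is $2t\,dd^c\phi_{i,t}$. Wedging over $i=1,\ldots,n$ extracts a factor $(2t)^n$ which cancels the normalisation, giving
$$
\frac{1}{t^n}\frac{1}{(2\pi)^n}\,dd^c\log\!\sum_{\lambda\in\Lambda_1}\!\E^{2\re\langle tz,\lambda\rangle}\wedge\ldots\wedge dd^c\log\!\sum_{\lambda\in\Lambda_n}\!\E^{2\re\langle tz,\lambda\rangle}=\frac{1}{\pi^n}\,dd^c\phi_{1,t}\wedge\ldots\wedge dd^c\phi_{n,t}.
$$
It therefore suffices to prove that $dd^c\phi_{1,t}\wedge\ldots\wedge dd^c\phi_{n,t}$ converges weakly to $dd^ch_1\wedge\ldots\wedge dd^ch_n$.

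The decisive step is an application of Bedford--Taylor theory. Each $h_i$ is a maximum of finitely many pluriharmonic functions $\re\langle z,\lambda\rangle$, hence a continuous, locally bounded plurisubharmonic function; consequently the mixed current $dd^ch_1\wedge\ldots\wedge dd^ch_n$ is well defined and positive as the Monge--Amp\`ere product of locally bounded plurisubharmonic potentials, which is the precise meaning of the "correctly defined positive current" in the statement. Since the $\phi_{i,t}$ are locally uniformly bounded and, by Lemma~\ref{lMA1}, converge locally uniformly to the $h_i$, the continuity of the mixed Monge--Amp\`ere operator under locally uniform convergence of locally bounded plurisubharmonic functions yields the claimed weak convergence; multiplying by $1/\pi^n$ finishes the proof.

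I expect the whole difficulty to sit in this last step. A wedge product of currents is not defined in general, so both the meaning of the limit $dd^ch_1\wedge\ldots\wedge dd^ch_n$ and the passage to the limit have to be justified, and it is exactly the continuity (local boundedness) of the support functions together with the uniform convergence from Lemma~\ref{lMA1} that brings the statement within reach of the Bedford--Taylor machinery. The positivity of the limit is then automatic, being inherited from the positivity of each approximating current $\frac{1}{\pi^n}\,dd^c\phi_{1,t}\wedge\ldots\wedge dd^c\phi_{n,t}$.
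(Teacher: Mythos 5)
Your argument is correct, and it supplies a proof where the paper gives none: the proposition is quoted with the bare citation ``see \cite{Ka3}'', and in that source the convergence is established by a direct study of the complex Monge--Amp\`ere operator on piecewise linear functions, computing $dd^ch_1\wedge\cdots\wedge dd^ch_n$ explicitly cell by cell as a measure determined by mixed pseudo-volumes of the polytopes' faces. Your route through Bedford--Taylor theory is genuinely different and considerably shorter: the rescaling identity $\frac{1}{t^n(2\pi)^n}\,dd^c(2t\,\phi_{1,t})\wedge\cdots\wedge dd^c(2t\,\phi_{n,t})=\frac{1}{\pi^n}\,dd^c\phi_{1,t}\wedge\cdots\wedge dd^c\phi_{n,t}$ is checked correctly, each $\phi_{i,t}=\frac{1}{2t}\log\sum_\lambda|\E^{\langle z,t\lambda\rangle}|^2$ is indeed smooth plurisubharmonic, and Lemma \ref{lMA1} in fact gives a global error bound $O(t^{-1}\log\#\Lambda_i)$, so the convergence $\phi_{i,t}\to h_i$ is uniform on all of $\C^n$, more than enough for the continuity theorem you invoke. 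Two remarks. First, you tacitly (and rightly) resolved an ambiguity in the statement: as printed, the form contains no $t$, and ``convergence with increasing $t$'' must be read through the dilation $z\mapsto tz$, exactly as your $\phi_{i,t}$ encode it; it is worth saying this explicitly, since otherwise the $1/t^n$ normalization would force the limit to vanish. Second, be precise about which Bedford--Taylor continuity result you use: the most commonly cited one is continuity along \emph{decreasing} sequences, which does not directly apply here; you need the version asserting continuity of $(u_1,\ldots,u_n)\mapsto dd^cu_1\wedge\cdots\wedge dd^cu_n$ under locally uniform convergence of locally bounded plurisubharmonic functions (available, e.g., in Demailly's book). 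With that reference pinned down, your proof is complete; what it does not recover, and what the approach of \cite{Ka3} additionally provides, is the explicit combinatorial description of the limit current, which is what links it to the mixed pseudo-volume $V(\gamma_1,\ldots,\gamma_n)$ in the subsequent corollary.
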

Let $K_1,\ldots,K_n\subset\C^n$ be convex bodies with support
functions $h_1,\ldots,h_n$, and $B\subset\C^n$ be a unit ball centered in $0$.
We call
$$V(K_1,\cdots,K_n)=\int_Bdd^ch_1\wedge\ldots\wedge dd^ch_n$$
by a mixed pseudo-volume of
convex bodies $K_1,\ldots,K_n\subset\C^n$.
For the correctness of definition and for some geometrical properties
of a mixed pseudo-volume see \cite{Alesk,Ka1,Ka2,Ka3}.
\begin{corollary}
$$\lim_{t\to+\infty} \frac{\mathfrak M_{tB}(V_1, \ldots, V_n)}{t^n}= \frac{n!}{\pi^n}V(\gamma_1,\cdots,\gamma_n)$$
where $\gamma_i$ is a Newton polytope of $\Lambda_i$.
\end{corollary}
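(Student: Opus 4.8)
The plan is to start from the integral representation of $\mathfrak M_{tB}$ established in Section \ref{s3}, rescale the domain of integration to the fixed unit ball $B$, and then invoke the Proposition to pass to the limit. Writing $\phi_i(z)=\log\sum_{\lambda\in\Lambda_i}\E^{2\re\langle z,\lambda\rangle}$, Theorem \ref{thm1} in the form given in Section \ref{s3} yields
$$
\mathfrak M_{tB}(V_1,\ldots,V_n)=\frac{n!}{(2\pi)^n}\int_{tB}dd^c\phi_1\wedge\ldots\wedge dd^c\phi_n.
$$
First I would perform the change of variables $z=tw$, i.e.\ pull back along the holomorphic dilation $m_t\colon w\mapsto tw$, which carries $B$ onto $tB$. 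Since $dd^c$ commutes with pull-back and respects wedge products, $m_t^*(dd^c\phi_i)=dd^c(\phi_i\circ m_t)$ with $(\phi_i\circ m_t)(w)=\log\sum_{\lambda\in\Lambda_i}\E^{2t\re\langle w,\lambda\rangle}$, so that
$$
\frac{\mathfrak M_{tB}(V_1,\ldots,V_n)}{t^n}=n!\int_B\Bigl[\frac{1}{t^n}\frac{1}{(2\pi)^n}dd^c(\phi_1\circ m_t)\wedge\ldots\wedge dd^c(\phi_n\circ m_t)\Bigr].
$$
The bracketed expression is precisely the rescaled form treated in the Proposition.

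Next, by the Proposition the bracketed current converges, as $t\to+\infty$, to $\frac{1}{\pi^n}dd^ch_1\wedge\ldots\wedge dd^ch_n$, where $h_i$ is the support function of $\gamma_i$. The underlying mechanism is Lemma \ref{lMA1}, which gives $\frac{1}{2t}(\phi_i\circ m_t)\to h_i$ uniformly, together with the continuity of the mixed operator $dd^cu_1\wedge\ldots\wedge dd^cu_n$ under such convergence of potentials. Passing to the limit and recalling that $V(\gamma_1,\ldots,\gamma_n)=\int_B dd^ch_1\wedge\ldots\wedge dd^ch_n$, one obtains
$$
\lim_{t\to+\infty}\frac{\mathfrak M_{tB}(V_1,\ldots,V_n)}{t^n}=\frac{n!}{\pi^n}\int_B dd^ch_1\wedge\ldots\wedge dd^ch_n=\frac{n!}{\pi^n}V(\gamma_1,\ldots,\gamma_n),
$$
which is the stated identity. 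The constant factor $n!/\pi^n$ is exactly the one recorded by the Proposition; it traces back to the normalization $\frac1{2t}$ in Lemma \ref{lMA1}, each factor $dd^c(\phi_i\circ m_t)$ contributing a $2t$ and producing the combination $2^n/(2\pi)^n=\pi^{-n}$.

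The step I expect to be the main obstacle is the interchange of the limit with integration over the fixed ball $B$. The convergence furnished by the Proposition is weak convergence of positive currents, which a priori may be tested only against smooth compactly supported forms, whereas integrating over $B$ amounts to pairing against the non-smooth indicator $\mathbf 1_B$. To make the passage rigorous I would verify that the limiting current $dd^ch_1\wedge\ldots\wedge dd^ch_n$ puts no mass on the sphere $\partial B$ and that no mass escapes across $\partial B$ in the limit. Positivity of all the currents involved, combined with the local uniform convergence of the potentials $\frac{1}{2t}(\phi_i\circ m_t)\to h_i$, is what I would exploit to rule out concentration on the boundary; once this mass-continuity on $B$ is secured, the computation above goes through and the Corollary follows.
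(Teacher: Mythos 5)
Your proof is correct and takes essentially the same route the paper intends: the Corollary is stated there as an immediate consequence of the Proposition (the rescaled form converging to $\frac{1}{\pi^n}dd^ch_1\wedge\ldots\wedge dd^ch_n$) combined with the definition $V(\gamma_1,\ldots,\gamma_n)=\int_B dd^ch_1\wedge\ldots\wedge dd^ch_n$, and your dilation $z=tw$ together with the constant bookkeeping $2^n/(2\pi)^n=\pi^{-n}$ is precisely that derivation made explicit. Your additional check that the limit current puts no mass on $\partial B$ addresses a point the paper leaves implicit (deferring correctness of the limit current to \cite{Ka3}), but it does not constitute a different approach.
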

\begin{proposition}[see \cite{Alesk,Ka1,Ka2}]
If $K_1,\cdots,K_n\subset\re\C^n$
then
$$V(K_1,\cdots,K_n)=\vol(K_1,\cdots,K_n),$$
where $\vol(K_1,\cdots,K_n)$ is a mixed volume of convex bodies $K_i$.
\end{proposition}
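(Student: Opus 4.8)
The plan is to exploit that both sides are symmetric multilinear forms, and then to reduce the computation of $V$ on the diagonal to an elementary fact about the real Monge--Ampère measure of a support function.

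\textbf{Reduction by multilinearity.}
First I would observe that $V$, like the classical mixed volume, is a symmetric form that is additive and positively homogeneous in each of its $n$ convex-body arguments: indeed $h_{K+L}=h_K+h_L$ and $h_{\lambda K}=\lambda h_K$ for $\lambda>0$, while $dd^c$ is $\R$-linear and the wedge product is multilinear, so $V(K_1,\dots,K_n)=\int_B dd^c h_{K_1}\wedge\dots\wedge dd^c h_{K_n}$ is multilinear and symmetric. Since $\vol(K_1,\dots,K_n)$ is by definition the polarization of the Euclidean volume $\vol_n$, it is enough to match the two forms on the diagonal: if I can show $V(K,\dots,K)=\vol_n(K)$ for every convex body $K\subset\re\C^n$, then inclusion--exclusion (polarization) over the Minkowski sums $\sum_{i\in I}K_i$, which again lie in $\re\C^n$, yields the general identity. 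By Hausdorff continuity of both sides -- classical for $\vol$, and for $V$ because support functions converge uniformly and the Monge--Ampère operator is continuous along such limits -- I may further assume that $K=P$ is a polytope.

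\textbf{Core computation.}
The support function $h_P(z)=\max_\alpha\re\langle z,a_\alpha\rangle$, where $a_\alpha$ are the vertices of $P$, depends only on $u=\re z$; write $h_P(z)=\phi(u)$ with $\phi$ the ordinary support function of $P\subset\R^n$. The key structural claim is that, because $h_P$ is independent of the imaginary coordinates $v=\im z$, its complex Monge--Ampère measure factorizes,
$$
(dd^c h_P)^n=c_n\,(\mathrm{MA}_\R\,\phi)\otimes dV(v),
$$
as the product of the real Monge--Ampère measure of $\phi$ on $\R^n_u$ with Lebesgue measure on $\R^n_v$, for an explicit constant $c_n$. For smooth strictly convex $\phi$ this is just the determinant identity $h_{z_j\bar z_k}=\tfrac14\,\partial^2\phi/\partial u_j\partial u_k$; the general case follows by approximating $\phi$ by smooth convex functions of $u$ and passing to the limit with the continuity of the Monge--Ampère operator, which is exactly the correctness of the definition of $V$ established in \cite{Alesk,Ka3}.

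\textbf{Conclusion.}
Finally I would use that the real Monge--Ampère measure of a support function concentrates at the origin: the subgradient map of $\phi=h_P$ sends $0$ to all of $P$ and sends every $u\ne 0$ to a proper face of $P$, whence $\mathrm{MA}_\R\,\phi=\vol_n(P)\,\delta_0$. Thus $(dd^c h_P)^n=c_n\vol_n(P)\,\delta_0(u)\otimes dV(v)$ is $c_n\vol_n(P)$ times Lebesgue measure on the imaginary subspace $i\R^n$. Integrating over the unit ball picks out $B\cap i\R^n$, the unit ball of $i\R^n$, so that $V(P,\dots,P)=c_n\,\omega_n\,\vol_n(P)$ with $\omega_n=\vol_n(B\cap i\R^n)$. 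The normalization of $dd^c$ in the definition of $V$ is fixed precisely so that $c_n\omega_n=1$ -- a single check on a reference body (directly for $n=1$, or on a cube) confirms it -- giving $V(P,\dots,P)=\vol_n(P)$, and polarization completes the argument.

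\textbf{Main obstacle.}
The delicate point is the Monge--Ampère factorization for the non-smooth $h_P$: since $\phi$ is $1$-homogeneous its classical Hessian degenerates and vanishes almost everywhere, so the entire mass of $(dd^c h_P)^n$ is carried by the singular locus $\{u=0\}$. Making this rigorous rests on the Bedford--Taylor theory underlying the cited correctness of $V$; granting that, everything reduces to the elementary identity $\mathrm{MA}_\R(h_P)=\vol_n(P)\,\delta_0$.
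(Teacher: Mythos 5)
The paper offers no proof of this proposition at all---it is imported wholesale from \cite{Alesk,Ka1,Ka2}---so your attempt can only be compared with the standard argument in those sources, which you have in fact reconstructed. Your skeleton is the right one and is essentially the classical proof: Minkowski-multilinearity and symmetry of $V$ reduce the identity, by uniqueness of polarization, to the diagonal case $V(K,\ldots,K)=\vol_n(K)$; the complex Hessian identity $h_{z_j\bar z_k}=\tfrac14\,\phi_{u_ju_k}$ for $h(z)=\phi(\re z)$ factorizes $(dd^ch)^n$ as a constant times $\mathrm{MA}_\R(\phi)\otimes dV(v)$; and the Aleksandrov measure of a support function concentrates at the origin with total mass $\vol_n(K)$, since the subgradient map sends $0$ to $K$ and each $u\ne 0$ into a proper face. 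All of this is correct, including the Bedford--Taylor underpinnings. Two small remarks: the polytope reduction is unnecessary (the subgradient argument works verbatim for arbitrary convex bodies), and in the continuity step one should note that the limit measure gives no mass to $\partial B$, so weak convergence does yield convergence of the integrals over $B$.

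The one genuine flaw is the final normalization claim. Your argument shows $V(K,\ldots,K)=c_n\omega_n\vol_n(K)$ with $\omega_n=\vol_n(B\cap i\R^n)$ the volume of the unit $n$-ball, but your assertion that ``a single check on a reference body confirms $c_n\omega_n=1$'' is false for the definition as literally displayed in the paper, under any fixed, $n$-independent convention for $d^c$. Indeed, with $dd^c=i\partial\bar\partial$ the Hessian identity gives
$$
(dd^ch)^n=\frac{n!}{2^n}\,\det\Bigl(\frac{\partial^2\phi}{\partial u_j\,\partial u_k}\Bigr)\,dV,
$$
so $c_n\omega_n=n!\,\omega_n/2^n$, which equals $1$ for $n=1$ but $\pi/2$ for $n=2$ and $\pi$ for $n=3$; since the sequence $n!\,\omega_n/2^n$ is not geometric in $n$, no rescaling $d^c\mapsto a\,d^c$ can make the constant equal to $1$ for all $n$ simultaneously. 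Thus your proposed check would succeed at $n=1$ and then fail at $n=2$ under the same convention. The statement as given requires an explicit $n$-dependent normalization in the definition of $V$ (equivalently, dividing the integral over $B$ by $\omega_n$), a point the paper silently delegates to the references ``for the correctness of the definition.'' Once the definition is normalized in this way, your argument is complete and is the expected proof.
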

For $K_1,\ldots,K_n\subset\Z^n\subset\re\C^n$
the BKK theorem \cite{BKK} follows; see \cite{BKK}.
\begin{thebibliography}{References}
\bibitem{we} D. Akhiezer, B. Kazarnovskii: \emph{Average number of zeros and mixed symplectic volume of Finsler sets}.
     GAFA, 2018, V. 28, N. 6, 1517-1547.
    %GAFA, 2018, DOI 10.1007/s00039-018-0464-9.

\bibitem{Alesk} S. Alesker:  \emph{Hard Lefschetz theorem for valuations, complex integral geometry, and unitarily invariant valuations.} J.
 Differential Geom. 63 (2003), no. 1, 63, v.95.

\bibitem{BKK} D. N. Bernstein: \emph{The number of roots of a system of equations}. Funct. Anal. Appl. 9 (1975), 183–185

\bibitem{Ka1} B.\,Kazarnovskii: {\it On the zeros of exponential sums}, Doklady AN SSSR, vol. 257, no.4 (1981), pp.804--808 (in Russian); Soviet Math. Dokl., vol. 23, no.2 (1981), pp.347--351.

\bibitem{Ka2} B.\,Kazarnovskii: {\it Newton polyhedra and zeros of systems of exponential sums}, Funct. Anal. Appl., vol. 18, no.4 (1984), pp.40--49 (in Russian); Funct. Anal. Appl., vol. 18, no.4 (1984), pp.299--307 (English translation).

\bibitem{Ka3} B. Ya. Kazarnovskii: \emph{On the action of the complex
Monge–Ampere operator on piecewise linear functions}, Funktional Anal. i Prilozhen. 48:1 (2014), 19–29; English transl., Funct. Anal. Appl. 48:1 (2014), 15–23.

\end {thebibliography}

\end{document}